\theoremstyle{plain}
\newtheorem*{thm}{Theorem}
\newtheorem{theorem}{Theorem}[section]
\newtheorem{lemma}[theorem]{Lemma}
\newtheorem{prop-def}[theorem]{Proposition-Definition}
\newtheorem{corollary}[theorem]{Corollary}
\newtheorem{prob}{Problem}
\theoremstyle{definition}
\newtheorem{remark}[theorem]{Remark}
\theoremstyle{remark}
\newtheorem*{ack}{Acknowledgement}
\numberwithin{equation}{section}
\def\Var{\mathrm{Var}}
\def\pr{\mathrm{pr}}
\def\sr{\mathrm{sr}}
\def\ord{\mathrm{ord}}
\def\Lbb{\mathbb{L}}
\def\Spec{\mathrm{Spec}}
\def\BM{\mathrm{BM}}
\def\L{\mathbb{L}}
\title[Contact loci, nearby cycles of nondegenerate polynomials]{\bf Geometry of nondegenerate polynomials: Motivic nearby cycles and Cohomology of contact loci}  
\author[Q.T. L\^e]{L\^e Quy Thuong}
\address{University of Science, Vietnam National University, Hanoi, 
\newline 
\indent 334 Nguyen Trai Street, Thanh Xuan District, Hanoi, Vietnam}
\email{leqthuong@gmail.com}
\author[T.T. Nguyen]{Nguyen Tat Thang}
\address{Institute of Mathematics, Vietnam Academy of Science and Technology
\newline
\indent 18 Hoang Quoc Viet Road, Cau Giay District, Hanoi, Vietnam}
\email{ntthang@math.ac.vn}
\thanks{}
\keywords{arc spaces, contact loci, motivic zeta function, motivic Milnor fiber, motivic nearby cycles, Newton polyhedron, nondegeneracy, sheaf cohomology with compact support}
\subjclass[2000]{Primary 14B05, 14B07, 14J17, 32S05, 32S30, 32S55}
\begin{document}           

\begin{abstract}
We study polynomials with complex coefficients which are nondegenerate in two senses, one of Kouchnirenko and the other with respect to its Newton polyhedron, through data on contact loci and motivic nearby cycles. Introducing an explicit description of these quantities we can answer in part to questions concerning the motivic nearby cycles of restriction functions and the integral identity conjecture in the context of Newton nondegenerate polynomials. Furthermore, in the nondegeneracy in the sense of Kouchnirenko, we give calculations on cohomology groups of the contact loci.
\end{abstract}
\maketitle                 

\section{Introduction}\label{sec1}
Let $f$ be a nondegenerate $\mathbb C$-polynomial in the sense of Kouchnirenko (cf. Section \ref{nondegenerate}) vanishing at the origin $O$ of $\mathbb C^d$. The problem of computing the motivic Milnor fiber $\mathscr S_{f,O}$ in terms of the Newton polyhedron $\Gamma$ of $f$ was firstly mentioned by Guibert in 2002 (cf. \cite{G}). Recently, Steenbrink and Bultot-Nicaise obtain solutions in terms of toric geometry (\cite{St2}), or of log smooth models (\cite{BN}). Their formula for $\mathscr{S}_{f,O}$ allows to compute the Hodge spectrum of the singularity of $f$ at $O$ by means of the additivity of the Hodge spectrum operator. In this article, we will show that the formula also provides a way to explore the following problem for Newton nondegenerate polynomials.

\begin{prob}\label{pro1}
Let $f$ be in $\mathbb C[x_1,\dots,x_d]$ with $f(O)=0$, and let $H$ be a hyperplane in $\mathbb C^d$. What is the relation between $\mathscr{S}_{f,O}$ and $\mathscr{S}_{f|_H,O}$?
\end{prob}

The question concerns a motivic analogue of a monodromy relation of a complex singularity and its restriction to a generic hyperplane studied early in \cite{LDT}. 
For $n\in \mathbb N^*$, the $n$-iterated contact locus $\mathscr X_{n,O}(f)$ admits a decomposition into $\mu_n$-invariant $\mathbb C$-subvarieties $\mathscr{X}_{J,a}^{(n)}$ along $a\in \mathbb N_{>0}^J$ and $J\subseteq [d]:=\{1,\dots,d\}$. 
The nondegeneracy of $f$ allows to describe $\mathscr{X}_{J,a}^{(n)}$ via $\Gamma$, as in Theorem \ref{mainthm1}, which is the key step to compute the motivic zeta function $Z_{f,O}(T)$ and the motivic Milnor fiber $\mathscr S_{f,O}$, which yields Theorem \ref{thm4.1}. For every face $\gamma$ of $\Gamma$, let $J_{\gamma}$ be the unique subset of $[d]$ such that $\gamma$ is contained in the hyperplanes $x_j=0$ for all $j\not\in J_{\gamma}$ and not contained in the other coordinate hyperplanes, and let $X_{\gamma}(0)$ (resp. $X_{\gamma}(1)$) be the $\mathbb C$-subvariety of $\mathbb G_{m,\mathbb C}^d$ defined by the face function $f_{\gamma}$ (resp. $f_{\gamma}-1$). Let $K$ be the set of all compact faces of $\Gamma$. 

\begin{thm}[see Theorem \ref{thm4.1}]
Let $f$ be nondegenerate in the sense of Kouchnirenko such that $f(O)=0$. Then the identity  $\mathscr{S}_{f,O}=\sum_{\gamma\in K}(-1)^{|J_{\gamma}|+1-\dim(\gamma)}\left([X_{\gamma}(1)]-[X_{\gamma}(0)]\right)$, holds true in the monodromic Grothendieck ring of $\mathbb C$-varieties endowed with $\hat\mu$-action.
\end{thm}

We choose the hyperplane defined by $x_d=0$ to be $H$ in Problem \ref{pro1}, and consider for any $n\geq m$ in $\mathbb N^*$ the so-called $(n,m)$-iterated contact locus $\mathscr{X}_{n,m,O}(f,x_d)$ of the pair $(f,x_d)$. It is a $\mu_n$-invariant $\mathbb C$-subvariety of $\mathscr X_{n,O}(f)$. Then we show in this article that the formal series   
$$
Z_{f,x_d,O}^{\Delta}(T):=\sum_{n\geq m\geq 1}\left[\mathscr{X}_{n,m,O}(f,x_d)\right]\L^{-(n+m)d}T^n
$$
is rational and it can be described via data of $\Gamma$. We put $\mathscr S_{f,x_d,O}^{\Delta}:=-\lim_{T\to \infty}Z_{f,x_d,O}^{\Delta}(T)$. Using the description of $\mathscr S_{f,x_d,O}^{\Delta}$ together with Theorem \ref{thm4.1}, a solution to Problem \ref{pro1} for the nondegeneracy in the sense of Kouchnirenko can be realized as in the following theorem.

\begin{thm}[part of Theorem \ref{restricting}] 
With $f$ as previous, the identity $\mathscr{S}_{f,O}=\mathscr{S}_{f|_H,O}+\mathscr S_{f,x_d,O}^{\Delta}$ holds in the monodromic Grothendieck ring of $\mathbb C$-varieties with $\hat\mu$-action.
\end{thm}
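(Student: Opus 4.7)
The plan is to prove the identity by decomposing the right-hand side of Theorem~\ref{thm4.1} according to whether or not $d\in J$, and matching the two resulting partial sums with $\mathscr{S}_{f|_H,O}$ and $\mathscr{S}_{f,x_d,O}^{\Delta}$ separately.

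For the first half, I would apply Theorem~\ref{thm4.1} to the restriction $f|_H\in\mathbb{C}[x_1,\ldots,x_{d-1}]$. The nondegeneracy of $f$ forces the nondegeneracy of $f|_H$ with respect to the Newton polyhedron $\Gamma\cap\{x_d=0\}$, whose compact faces are exactly those compact faces of $\Gamma$ contained in $\{x_d=0\}$, and whose face functions coincide with the corresponding face functions of $f$. This identifies $\mathscr{S}_{f|_H,O}$ with the subsum of the formula of Theorem~\ref{thm4.1} for $\mathscr{S}_{f,O}$ indexed by $J\subseteq\{1,\ldots,d-1\}$, i.e.\ the terms with $d\notin J$.

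For the second half I would establish a parallel formula for $\mathscr{S}_{f,x_d,O}^{\Delta}$. Writing $a_j=\ord_t(\varphi_j)$ and $J=\{j:a_j<\infty\}$ for an arc $\varphi\in\mathscr{X}_{n,O}(f)$, the condition $\ord_t(x_d\circ\varphi)=m$ forces $d\in J$ and $a_d=m$, so that
\[
\mathscr{X}_{n,m,O}(f,x_d)=\bigsqcup_{\substack{d\in J\subseteq[d]\\ a\in\mathbb{N}_{>0}^J,\,a_d=m}}\mathscr{X}_{n,a}(f).
\]
The fibration description of each $\mathscr{X}_{n,a}(f)$ given by Theorem~\ref{mainthm1} then feeds into $Z_{f,x_d,O}^{\Delta}(T)$; summing the resulting geometric series cone by cone over the dual fan of $\Gamma$ (with the extra constraint $a_d\geq 1$) yields rationality and, upon computing $-\lim_{T\to\infty}$, exactly the subsum of Theorem~\ref{thm4.1} indexed by $J\ni d$, i.e.\ by the faces $\gamma\in\Gamma_\circ^{\widetilde{J}^c}$ not contained in $\{x_d=0\}$. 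Adding this to the subsum from the first half reassembles the full right-hand side of Theorem~\ref{thm4.1}, giving the claim.

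The main obstacle is the combinatorial bookkeeping in the second half: one has to verify that the sign $(-1)^{|J|+1-\dim(\gamma)}$, the Lefschetz factor $\L^{|J\setminus\widetilde{J}|}$, and the classes $[X_{\gamma,\widetilde{J}}(1)]-[X_{\gamma,\widetilde{J}}(0)]$ all emerge correctly from the cone-by-cone limit, that the passage $T\to\infty$ commutes with the double summation in $n$ and $m$, and that the subset $\widetilde{J}$ attached to the pair $(f,x_d)$ coincides with the one attached to $f$ alone whenever $d\in J$. This reduces to the same Euler-characteristic-type cancellation that drives the proof of Theorem~\ref{thm4.1}, restricted to the slab $a_d\geq 1$, but the presence of the additional index $m$ in the series makes the telescoping delicate and requires a careful regrouping of the contributions of each maximal cone of the dual fan.
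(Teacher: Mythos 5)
Your strategy is essentially the same as the paper's: split the right-hand side of Theorem~\ref{thm4.1} according to $d\notin J$ versus $d\in J$, identify the first block with $\mathscr{S}_{\widetilde f,\widetilde O}$ (using that $\Gamma(\widetilde f)=\Gamma\cap\{x_d=0\}$ and that the $\widetilde J$'s computed from $f$ and from $\widetilde f$ agree for $J\subseteq[d-1]$), and derive a closed formula for $\mathscr{S}_{f,x_d,O}^{\Delta}$ via the two-variable zeta function specialized along the linear form $(n,m)\mapsto n$, exactly as the paper does with its $Z_{f,x_d,O}^{\Delta,p}(T)$. One imprecision worth fixing: your displayed decomposition $\mathscr{X}_{n,m,O}(f,x_d)=\bigsqcup\mathscr{X}_{n,a}(f)$ cannot hold literally, since the left side lives in $\mathscr{L}_{n+m}(\mathbb{A}^d)$ and the pieces on the right live in $\mathscr{L}_n(\mathbb{A}^d)$; the paper instead stratifies $\mathscr{X}_{n,m,O}(f,x_d)$ by the subvarieties $\mathscr{X}_{n,m,a}(f,x_d)\subset\mathscr{L}_{n+m}(\mathbb{A}^d)$ (with $J\subseteq[d-1]$, the order of $x_d$ fixed at $m$) and applies the Theorem~\ref{mainthm1}-type fibration to those, with the discrepancy in truncation length being absorbed by the normalization $\mathbb{L}^{-(n+m)d}$ in the zeta function. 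With that adjustment your argument matches the paper's proof.
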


We also obtain a similar result on the motivic nearby cycles (Theorem \ref{restricting}). An important consequence of Theorems \ref{mainthm1}, \ref{thm4.1} and \ref{restricting} is a very elementary proof of the integral identity conjecture for Newton nondegenerate polynomials (Corollary \ref{IIC}). 

According to \cite[Conjecture 1.5]{BFLN}, it is expected that the singular cohomology groups with compact support of the $\mathbb C$-points of the contact loci are nothing but the Floer cohomology groups of the powers of the monodromy of the singularity (cf. \cite{Mc}). Here, we are interested in a smaller problem on cohomology groups of $\mathscr X_{n,O}(f)$ (compare with \cite[Theorem 1.1]{BFLN}).

\begin{prob}\label{pro2}
Let $f$ be a polynomial over $\mathbb C$ vanishing at the origin $O$. Compute the cohomology groups with compact support $H_c^m(\mathscr X_{n,O}(f),\mathbb C)$.
\end{prob}

We devote Section \ref{Sec5-cloci} to study this problem for nondegenerate singularities in the sense of Kouchnirenko not only using sheaf cohomology with compact support but also the Borel-Moore homology $H_*^{\BM}$. Write $\mathscr X_{n,O}(f)=\bigsqcup_{(J,a)\in \widetilde{\mathcal P}_n}\mathscr{X}_{J,a}^{(n)}$, where $\widetilde{\mathcal P}_n$ is defined right after (\ref{otherdecomp}). Let $\eta: \widetilde{\mathcal P}_n\to \mathbb{Z}$ be the function defined by $\eta(J,a)=\dim_{\mathbb{C}}\mathscr{X}_{J,a}^{(n)}$. We prove the following results:
\begin{thm}[Theorems \ref{thm5.4}, \ref{thm5.6}] 
For $f$ as in Problem \ref{pro2} and nondegenerate in the sense of Kouchnirenko, there exist spectral sequences
\begin{gather*}
E_{p,q}^1:=\bigoplus_{\eta(J, a)=p}H^{\BM}_{p+q}(\mathscr{X}_{J,a}^{(n)})\Longrightarrow  H^{\BM}_{p+q}(\mathscr X_{n,O}(f)),\\
E^{p,q}_1:=\bigoplus_{\eta(J,a)=p}H^{p+q}_c(\mathscr{X}_{J,a}^{(n)}, \mathcal{F})\Longrightarrow  H^{p+q}_c(\mathscr X_{n,O}(f), \mathcal{F}),
\end{gather*}
for any sheaf of abelian groups $\mathcal{F}$ on $\mathscr X_{n,O}(f)$. 
\end{thm}

In particular, by applying the second spectral sequence with $ \mathcal{F}$ being a constant sheaf, we obtain a spectral sequence converging to the compact support cohomology groups of contact loci with complex coefficient whose first page is a direct sum of (singular) homology of $X_{\gamma}(0)$ and $X_{\gamma}(1)$ (see Corollary \ref{corconstantsheaf}).


\section{Preliminaries}\label{sec2}
\subsection{Monodromic Grothendieck ring of varieties}
Let $S$ be an algebraic $\mathbb C$-variety. Let $\Var_S$ be the category of $S$-varieties, with objects being morphisms of algebraic $\mathbb C$-varieties $X\to S$ and a morphism in $\Var_S$ from $X\to S$ to $Y\to S$ being a morphism of algebraic $\mathbb C$-varieties $X\to Y$ commuting with $X\to S$ and $Y\to S$. Denote by $\hat{\mu}$ the limit of the projective system $\mu_{nm}\to \mu_n$ given by $x\mapsto x^m$, with $\mu_n=\Spec \mathbb C[\xi]/(\xi^n-1)$ the group scheme over $\mathbb C$ of $n$th roots of unity. An action of $\hat\mu$ on a variety $X$ is an action of a group $\mu_n$ on $X$, and the action is good if every orbit is contained in an affine open subset of $X$. By definition, an action of $\hat\mu$ on an affine Zariski bundle $X\to B$ is affine if it is a lifting of a good action on $B$ and its restriction to all fibers is affine.

The Grothendick group $K_0^{\hat\mu}(\Var_S)$ is defined to be an abelian group generated by symbols $[X\to S]$, $X$ endowed with a good $\hat{\mu}$-action and $X\to S$ in $\Var_S$, such that:
\begin{itemize}
\item[i)] $[X\to S]=[Y\to S]$ if $X$ and $Y$ are $\hat\mu$-equivariant $S$-isomorphic;  
\item[ii)] $[X\to S]=[Y\to S]+[X\setminus Y\to S]$ if $Y$ is a $\hat\mu$-invariant closed subvariety in $X$; and
\item[iii)] $[X\times\mathbb{A}_{\mathbb C}^n,\sigma]=[X\times\mathbb{A}_{\mathbb C}^n,\sigma']$ 
if $\sigma$ and $\sigma'$ are liftings of the same $\hat{\mu}$-action on $X$ to $X\times\mathbb{A}_{\mathbb C}^n$. 
\end{itemize}
There is a natural ring structure on $K_0^{\hat\mu}(\Var_S)$ in which the product is induced by the fiber product over $S$. The unit $1_S$ for the product is the class of the identity morphism $S\to S$ with $S$ endowed with trivial $\hat\mu$-action. Denote by $\Lbb$ (or $\Lbb_S$) the class of the trivial line bundle $S\times\mathbb{A}^1\to S$, and define the localized ring $\mathscr{M}_S^{\hat{\mu}}$ to be $K_0^{\hat\mu}(\Var_S)[\Lbb^{-1}]$.

Let $f:S\to S'$ be a morphism of algebraic $\mathbb C$-variety. Then we have two important morphisms associated to $f$, which are the ring homomorphism $f^*: \mathscr{M}_{S'}^{\hat{\mu}}\to \mathscr{M}_S^{\hat{\mu}}$ induced from the fiber product (the pullback morphism) and the $\mathscr M_{\mathbb C}$-linear homomorphism $f_!: \mathscr{M}_S^{\hat{\mu}}\to \mathscr{M}_{S'}^{\hat{\mu}}$ defined by the composition with $f$ (the push-forward morphism). When $S'$ is $\Spec \mathbb C$, one usually writes $\int_S$ instead of $f_!$.

\subsection{Rational series and limit}\label{2.3}
Let $\mathscr A$ be either $\mathbb{Z}[\Lbb,\Lbb^{-1}]$ or $\mathscr{M}_S^{\hat\mu}$ as a ring. Let $\mathscr A[[T]]_{\sr}$ be the $\mathscr A$-submodule of $\mathscr A[[T]]$ generated by $1$ and by finite products of elements of the form $\frac{\Lbb^aT^b}{1-\Lbb^aT^b}$ with $(a,b)$ in $\mathbb{Z}\times\mathbb{N}_{>0}$. Each element of $\mathscr A[[T]]_{\sr}$ is called a {\it rational series}. By \cite{DL1}, there is a unique $\mathscr A$-linear morphism $\lim\limits_{T\rightarrow\infty}: \mathscr A[[T]]_{\sr}\rightarrow \mathscr A$ which sends $\frac{\Lbb^aT^b}{1-\Lbb^aT^b}$ to $-1$. 

During this article, we denote $[d]:=\{1,\dots,d\}, d\in \mathbb{N}^{*}$. For $J$ contained in $[d]$, we denote by $\mathbb{R}_{\geq 0}^J$ the set of $(a_j)_{j\in J}$ with $a_j$ in $\mathbb{R}_{\geq 0}$ for all $j\in J$, and by $\mathbb{R}_{>0}^J$ the subset of $\mathbb{R}_{\geq 0}^J$ consisting of $(a_j)_{j\in J}$ with $a_j>0$ for all $j\in J$. Similarly, one can define the sets $\mathbb{Z}_{\geq 0}^J$, $\mathbb{Z}_{> 0}^J$ and $\mathbb{N}_{>0}^J$. Let $\sigma$ be a rational polyhedral convex cone in $\mathbb{R}_{>0}^J$ and let $\overline{\sigma}$ denote its closure in $\mathbb{R}_{\geq 0}^J$ with $J$ a finite set. Let $\ell$ and $\ell'$ be two integer linear forms on $\mathbb{Z}^J$ positive on $\overline{\sigma}\setminus\{(0,\dots,0)\}$. Lemma 2.1.5 in \cite{G} tells us that if $\sigma$ is open in its linear span and $\overline{\sigma}$ is generated by part of a $\mathbb{Z}$-basis of the $\mathbb{Z}$-module $\mathbb{Z}^J$, then the series $$S_{\sigma,\ell,\ell'}(T):=\sum_{a\in\sigma\cap\mathbb{N}_{>0}^J}\mathbb{L}^{-\ell'(a)}T^{\ell(a)}$$
is in $\mathbb{Z}[\mathbb{L},\mathbb{L}^{-1}][[T]]_{\sr}$ and $\lim\limits_{T\rightarrow\infty}S_{\sigma,\ell,\ell'}(T)=(-1)^{\dim(\sigma)}$.

\subsection{Motivic nearby cycles of regular functions}\label{remark}
For any $\mathbb C$-variety $X$, let $\mathscr{L}_n(X)$ be the space of $n$-jets on $X$, and $\mathscr{L}(X)$ the arc space on $X$, which is the limit of the projective system of spaces $\mathscr{L}_n(X)$ and canonical morphisms $\mathscr{L}_m(X)\to \mathscr{L}_n(X)$ for $m\geq n$.  The group $\hat\mu$ acts on $\mathscr{L}_n(X)$ via $\mu_n$ in such a natural way that $\xi\cdot\varphi(t)=\varphi(\xi t)$ for $\xi\in \mu_n$. 

From now on, we assume that the $\mathbb C$-variety $X$ is smooth and of pure dimension $d$. Consider a regular function $f:X\to \mathbb{A}_{\mathbb C}^1$, with the zero locus $X_0$. For $n\geq 1$ one defines the {\it $n$-iterated contact locus} of $f$ as follows
$$\mathscr{X}_n(f)=\{\varphi\in\mathscr{L}_n(X)\mid f(\varphi)=t^n\mod t^{n+1}\}.$$
Clearly, this variety is invariant by the $\hat\mu$-action on $\mathscr{L}_n(X)$ and admits a morphism to $X_0$ given by $\varphi(t)\mapsto \varphi(0)$, which defines an element $[\mathscr{X}_n(f)]:=[\mathscr{X}_n(f)\to X_0]$ in $\mathscr{M}_{X_0}^{\hat\mu}$. We consider Denef-Loeser's motivic zeta function
$Z_f(T)=\sum_{n\geq 1}\left[\mathscr{X}_n(f)\right]\Lbb^{-nd}T^n.$
They prove in \cite{DL1} that $Z_f(T)$ is in $\mathscr{M}_{X_0}^{\hat\mu}[[T]]_{\sr}$, and call the limit $\mathscr S_f:=-\lim\limits_{T\to\infty}Z_f(T)$ in $\mathscr{M}_{X_0}^{\hat\mu}$ the {\it motivic nearby cycles} of $f$. If $x$ is a closed point of $X_0$, the $\mathbb C$-variety
$$\mathscr{X}_{n,x}(f)=\{\varphi\in\mathscr{L}_n(X)\mid f(\varphi)=t^n\mod t^{n+1}, \varphi(0)=x\},$$
is also invariant by the $\hat\mu$-action on $\mathscr L_n(X)$, called the {\it $n$-iterated contact locus} of $f$ at $x$. It is also proved that the zeta function 
$Z_{f,x}(T)=\sum_{n\geq 1}\left[\mathscr{X}_{n,x}(f)\right]\Lbb^{-nd}T^n$
is in $\mathscr{M}_{\mathbb C}^{\hat\mu}[[T]]_{\sr}$. The limit $\mathscr S_{f,x}=-\lim\limits_{T\to\infty}Z_{f,x}(T)$ is called the {\it motivic Milnor fiber} of $f$ at $x$. Obviously, if $\iota$ is the inclusion of $\{x\}$ in $X_0$, then $\mathscr S_{f,x}=\iota^*\mathscr S_f$ in $\mathscr{M}_{\mathbb C}^{\hat\mu}$.

We now modify slightly the motivic zeta functions of several functions in \cite{G} and \cite{GLM2}. For a pair of regular functions $(f,g)$ on $X$, we denote by $X_0:=X_0(f,g)$ their common zero locus. For $n\geq m$ in $\mathbb N^*$, we define
$$\mathscr X_{n,m}(f,g):=\left\{\varphi\in\mathscr{L}_n(X)\mid f(\varphi)= t^n\mod t^{n+1}, \ord_tg(\varphi)=m\right\}.$$
We can check that $\mathscr{X}_{n,m}(f,g)$ is invariant under the natural $\mu_n$-action on $\mathscr{L}_n(X)$, and that there is an obvious morphism of $\mathbb C$-varieties $\mathscr{X}_{n,m}(f,g)\to X_0$; from which we obtain the class $[\mathscr{X}_{n,m}(f,g)]$ of that morphism in $\mathscr{M}_{X_0}^{\hat\mu}$. Consider the series
$$
Z_{f,g}^{\Delta}(T):=\sum_{n\geq m\geq 1}\big[\mathscr X_{n,m}(f,g)\big]\Lbb^{-nd}T^n$$
in $\mathscr M_{X_0}^{\hat\mu}[[T]]$. For any closed point $x\in X_0$, we can define $Z_{f,g,x}^{\Delta}(T)$ in $\mathscr M_{\mathbb C}^{\hat\mu}[[T]]$ as above with $\mathscr{X}_{n,m}(f,g)$ replaced by its $\mu_n$-invariant subvariety $\mathscr{X}_{n,m,x}(f,g):=\{\varphi\in \mathscr{X}_{n,m}(f,g)\mid \varphi(0)=x\}$. The rationality of the series $Z_{f,g}^{\Delta}(T)$ and $Z_{f,g,x}^{\Delta}(T)$ are stated in \cite[Th\'eor\`eme 4.1.2]{G} and \cite[Section 2.9]{GLM2}, up to the isomorphism of rings $\mathscr{M}_{X_0}^{\hat{\mu}}\cong\mathscr{M}_{X_0\times\mathbb{G}_m}^{\mathbb{G}_m}$ (see \cite[Proposition 2.6]{GLM1}), where Guibert-Loeser-Merle's result is done in the framework $\mathscr{M}_{X_0\times\mathbb{G}_m}^{\mathbb{G}_m}$. Put $\mathscr S_{f,g}^{\Delta}:=-\lim\limits_{T\to \infty}Z_{f,g}^{\Delta}(T)$ and $\mathscr S_{f,g,x}^{\Delta}:=-\lim\limits_{T\to \infty}Z_{f,g,x}^{\Delta}(T)$.


\section{Motivic nearby cycles of a nondegenerate polynomial and applications}\label{sec3}
\subsection{Newton polyhedron of a polynomial}\label{nondegenerate}
During this article, we use the symbol $[d]$ for the set $\{1,\dots,d\}$, for $d$ in $\mathbb N^*$. Let $x=(x_1,\dots,x_d)$ be a set of $d$ variables, and let $f(x)=\sum_{\alpha\in\mathbb{N}^d}c_{\alpha}x^{\alpha}$ be in $\mathbb C[x]$ with $f(O)=0$, where $O$ is the origin of $\mathbb C^d$. Let $\Gamma$ be the Newton polyhedron of $f$, i.e., the convex hull of the set $\bigcup_{c_{\alpha}\not=0}(\alpha+\mathbb{R}_{\geq 0}^d)$ in $\mathbb{R}_{\geq 0}^d$. Let $F$, resp. $K$, denote the set of all the faces, resp. the compact faces, of $\Gamma$. For every face $\gamma$ of $\Gamma$ (not necessarily compact, the case $\gamma=\Gamma$ included), we define by $f_{\gamma}(x)=\sum_{\alpha\in\gamma}c_{\alpha}x^{\alpha}$ the {\it face function} of $f$ with respect to $\gamma$. The polynomial $f$ is called  {\it nondegenerate on the face  $\gamma\in F$} if  $f_{\gamma}$ has no singular point in $\mathbb{G}_{m,\mathbb C}^d$. We say that $f$ is {\it nondegenerate in the sense of Kouchnirenko} if it is nondegenerate on every compact face $\gamma\in K$. If $f$ is nondegenerate on every face of $\Gamma$ (including non-compact faces, and $\Gamma$ itself), we say that $f$ is {\it nondegenerate in the sense of Newton polyhedron} or simply {\it Newton nondegenerate}. Consider the function 
$\ell=\ell_{\Gamma}: \mathbb R_{\geq 0}^d \to \mathbb R$
which sends $a$ in $\mathbb R_{\geq 0}^d$ to $\inf_{b\in\Gamma}\langle a,b\rangle$,
where $\langle \bullet,\bullet\rangle$ is the standard inner product in $\mathbb R^d$. For $a$ in $\mathbb{R}_{\geq 0}^d$, we denote by $\gamma_a$ the face of $\Gamma$ to which the restriction of the function $\langle a,\bullet\rangle$ gets its minimum, i.e., $b\in\Gamma$ is in $\gamma_a$ if and only if $\langle a,b\rangle=\ell(a)$. Note that $\gamma_a$ is a compact face if and only if $a$ is in $\mathbb{R}_{>0}^d$. Moreover, $\gamma_a=\Gamma$ when $a=(0,\dots,0)$ in $\mathbb R^d$, and $\gamma_a$ is a proper face of $\Gamma$ otherwise. For every proper face $\gamma$ of $\Gamma$, we define 
$$\sigma_{\gamma}:=\sigma_{[d],\gamma}:=\{a\in\mathbb{R}_{\geq 0}^d\mid \gamma=\gamma_a\}.$$ 
It is clear that $\sigma_{\gamma}$ is a cone of dimension $d-\dim(\gamma)$. 

For any $J\subseteq [d]$, put $\mathbb{A}_{\mathbb C}^J:=\Spec \left(\mathbb C\left[(x_i)_{i\in J}\right]\right)$ and $f^J:=f|_{\mathbb A_{\mathbb C}^J}$. If $f$ is nondegenerate in the sense of Kouchnirenko (resp. Newton nondegenerate) then $f^J$ is also nondegenerate in the sense of Kouchnirenko (resp. Newton polyhedron). If $\gamma$ is a proper face of $\Gamma(f^J)$, we denote by $\sigma_{J,\gamma}$ the cone $\{a\in\mathbb{R}_{\geq 0}^J\mid \gamma=\gamma_a\}$, which has the dimension $|J|-\dim(\gamma)$.

\subsection{Contact loci}\label{sect3.2}
Let $f(x_1,\dots,x_d)$ be as above. For $n\in \mathbb N^*$, $k\in \mathbb N$ and $J\subseteq [d]$, we denote by $\Delta_J^{(n,k)}$ the set of $a\in \{0,\dots,n\}^J$ such that $\ell_J(a)+k=n$, where $\ell_J$ stands for $\ell_{\Gamma(f^J)}$. For  $a\in \Delta_J^{(n,k)}$, put
$$\mathscr{X}_{J,a}^{(n)}:=\left\{\varphi\in\mathscr{X}_n(f)\mid \ord_tx_j(\varphi)=a_j \ \forall j\in J,\ x_i(\varphi)\equiv 0\ \forall  i\not\in J\right\}.$$ 
This subvariety of $\mathscr{X}_n(f)$ is invariant by the $\mu_n$-action given by $\xi\cdot\varphi(t)=\varphi(\xi t)$, and it defines an element $[\mathscr{X}_{J,a}^{(n)}]:=[\mathscr{X}_{J,a}^{(n)}\to X_0]$ in $K_0^{\hat\mu}(\Var_{X_0})$, where the structure map is given by $\varphi\mapsto \varphi(0)$. Let $\mathcal{P}_n$ be the index set consisting of all such pairs $(J,a)$ such that 
\begin{align}\label{decompXn}
\mathscr X_{n}(f)=\bigsqcup_{(J,a)\in \mathcal{P}_n}\mathscr{X}_{J,a}^{(n)}.
\end{align} 
Note that for every face $\gamma$ of $\Gamma$ (including $\Gamma$ itself), there exists a unique set $J_{\gamma}\subseteq [d]$ such that $\gamma$ is contained in the hyperplanes $x_j=0$ for all $j\not\in J_{\gamma}$ and not contained in other coordinate hyperplanes. By this, the index set $\mathcal P_n$ in (\ref{decompXn}) is nothing else than the set of pairs $(J_{\gamma},a)$ such that $\gamma\in F$ and $a\in \bigsqcup_{k\in \mathbb N}\left(\sigma_{J_{\gamma},\gamma}\cap \Delta_{J_{\gamma}}^{(n,k)}\right)$.

In particular, if  $f$ is Newton nondegenerate, the $\mathbb C$-subvariety $X_{J}(0)$ of $\mathbb G_{m,\mathbb C}^J$ defined by $f^J(x)$ is smooth for every $J\subseteq [d]$. Thus, we have a $\hat\mu$-equivariant Zariski locally trivial fibration $\mathscr{X}_{J,(0,\dots,0)}^{(n)}\to X_{J}(0)$ with fiber $\mathbb A_{\mathbb C}^{(|J|-1)n}$ (proving this statement is part of the proof of Theorem \ref{mainthm1} below). 

For every face $\gamma\in F$, let us consider the $\mathbb C$-varieties
$$
X_{\gamma}(1):=\big\{x\in \mathbb G_{m,\mathbb C}^{J_{\gamma}} \mid f_{\gamma}(x)=1\big\}\quad \text{and} \quad X_{\gamma}(0):=\big\{x\in \mathbb G_{m,\mathbb C}^{J_{\gamma}} \mid f_{\gamma}(x)=0\big\}.
$$
We always consider the trivial action of $\hat\mu$ on the variety $X_{\gamma}(0)$.  Let $a$ be in the relative interior $\textrm{rel.int.}\sigma_{\gamma}$ of the dual cone $\sigma_{\gamma}$ of $\gamma$, then $\gamma= \gamma_a$.   If $\gamma_a$ is compact, the variety $X_{\gamma}(1)$ admits a natural $\mu_{\ell_{J_{\gamma}}(a)}$-action as follows  
$$e^{2\pi ir/\ell_{J_{\gamma}}(a)}\cdot (x_j)_{j\in J_{\gamma}}:=\big(e^{2\pi ira_j/\ell_{J_{\gamma}}(a)}x_j\big)_{j\in J_{\gamma}},$$
for $r\in [\ell_{J_{\gamma}}(a)]$. Let $s=s_J$ denote the sum function: $s(a)=\sum_{j\in J}a_j$ for $a=(a_j)_{j\in J}\in \mathbb R^J$.

\begin{theorem}\label{mainthm1}
Assume that $f\in \mathbb{C}[x]$ is nondegenerate on a face $\gamma\in F$. If $a\in \sigma_{J_{\gamma},\gamma}\cap \Delta_{J_{\gamma}}^{(n,0)}$ (hence $n=\ell_{J_{\gamma}}(a)$), there is naturally a $\mu_n$-equivariant isomorphism of $\mathbb{C}$-varieties 
$$\tau: \mathscr{X}_{J_{\gamma},a}^{(n)}\to X_{\gamma}(1) \times_{\mathbb C}\mathbb A_{\mathbb C}^{|J_{\gamma}|\ell_{J_{\gamma}}(a)-s(a)}.$$ 
If $k\in \mathbb N^*$ and $a\in \sigma_{J_{\gamma},\gamma}\cap \Delta_{J_{\gamma}}^{(n,k)}$, there is a Zariski locally trivial fibration 
$$\pi: \mathscr{X}_{J_{\gamma},a}^{(n)}\to X_{\gamma}(0)$$
with fiber $\mathbb A_{\mathbb C}^{|J_{\gamma}|(\ell_{J_{\gamma}}(a)+k)-s(a)-k}$. 

As a consequence, the identities $\left[\mathscr{X}_{J_{\gamma},a}^{(n)}\right]=\left[X_{\gamma}(1)\right]\mathbb{L}^{|J_{\gamma}|\ell_{J_{\gamma}}(a)-s(a)}$ for $a\in \sigma_{J_{\gamma},\gamma}\cap \Delta_{J_{\gamma}}^{(n,0)}$, and $ \left[\mathscr{X}_{J_{\gamma},a}^{(n)}\right]=\left[X_{\gamma}(0)\right]\mathbb{L}^{|J_{\gamma}|(\ell_{J_{\gamma}}(a)+k)-s(a)-k}$ for $k\in \mathbb N^*$ and $a\in \sigma_{J_{\gamma},\gamma}\cap \Delta_{J_{\gamma}}^{(n,k)}$ hold in $\mathscr M_{\mathbb{C}}^{\hat\mu}$.
\end{theorem}

\begin{proof}
It suffices to prove the theorem for $J_{\gamma}=[d]$. Let $a=(a_1,\dots,a_d)$ be in $\sigma_{\gamma}\cap \Delta_{[d]}^{(n,0)}$, hence $n=\ell(a)$ and $\gamma=\gamma_a$. Every element $\varphi$ in $\mathscr{X}_{[d],a}^{(n)}$ has the form $\big(\sum_{j=a_1}^{\ell(a)}b_{1j}t^j,\dots, \sum_{j=a_d}^{\ell(a)}b_{dj}t^j\big)$ with $b_{ia_i}\not=0$ for $1\leq i\leq d$. The coefficient of $t^{\ell(a)}$ in $f(\varphi(t))$ is nothing but $f_{\gamma_a}(b_{1a_1},\dots,b_{da_d})$, thus $(b_{1a_1},\dots,b_{da_d})$ is in $X_{\gamma_a}(1)$. We deduce that $\mathscr{X}_{[d],a}^{(\ell(a))}$ is $\mu_{\ell(a)}$-equivariant isomorphic to $X_{\gamma_a}(1)\times_{\mathbb C}\mathbb{A}_{\mathbb C}^{d\ell(a)-s(a)}$ (where the group $\mu_{\ell(a)}$ acts trivially on $\mathbb{A}_{\mathbb C}^{d\ell(a)-s(a)}$) via the map 
$$\theta: \varphi(t)\mapsto \left((b_{ia_i})_{1\leq i\leq d},(b_{ij})_{1\leq i\leq d,a_i<j\leq \ell(a)}\right).$$
Indeed, for every $\xi$ in $\mu_{\ell(a)}$, the element $\varphi(\xi t)$ is sent to $\left((\xi^{a_i}b_{ia_i})_{1\leq i\leq d},(b_{ij})_{1\leq i\leq d,a_i<j\leq \ell(a)}\right)$ which equals $\xi\cdot\left((b_{ia_i})_{1\leq i\leq d},(b_{ij})_{1\leq i\leq d,a_i<j\leq \ell(a)}\right)$. Thus $\theta$ is a $\mu_{\ell(a)}$-equivariant isomorphism. 

Now we prove the second statement. Let $a$ be in $\sigma_{J_{\gamma},\gamma}\cap \Delta_{J_{\gamma}}^{(n,k)}$ for $k\in \mathbb N^*$, hence $n=\ell(a)+k$ and $\gamma=\gamma_a$. For $\varphi$ in $\mathscr X_{[d],a}^{n}$, putting 
\begin{align}\label{tildephi}
\widetilde{\varphi}:=\big(t^{-a_1}x_1(\varphi),\dots,t^{-a_d}x_d(\varphi)\big),
\end{align}
we get
\begin{equation}\label{onestar}
f(\varphi)=t^{\ell(a)}f_{\gamma_a}(\widetilde{\varphi})+\sum_{k\geq 1}t^{\ell(a)+k}\sum_{\langle \alpha,a \rangle=\ell(a)+k}c_{\alpha}\widetilde{\varphi}^{\alpha}.
\end{equation}
Defining
$$\widetilde f(\widetilde{\varphi},t):=f_{\gamma_a}(\widetilde{\varphi})+\sum_{k\geq 1}t^k\sum_{\langle \alpha,a \rangle=\ell(a)+k}c_{\alpha}\widetilde{\varphi}^{\alpha},$$
we obtain a function
$$\widetilde f: \mathscr L_{\ell(a)+k+1-a_1}(\mathbb A_{\mathbb C}^1)\times_{\mathbb C}\cdots \times_{\mathbb C}\mathscr L_{\ell(a)+k+1-a_d}(\mathbb A_{\mathbb C}^1)\times_{\mathbb C} \mathbb A_{\mathbb C}^1 \to \mathbb A_{\mathbb C}^1$$
given by $\widetilde f(\widetilde{\varphi},t_0):=\widetilde f(\widetilde{\varphi}(t_0),t_0)$. It thus follows from (\ref{onestar}) that $\varphi$ is in $\mathscr X_{[d],a}^{\ell(a)+k}$ if and only if $\widetilde f(\widetilde{\varphi},t)=t^k\mod t^{k+1}$. Putting $\widetilde{\varphi}_i=\sum_{j=0}^{\ell(a)-a_i+k}b_{ij}t^j$ for $1\leq i\leq d$, the latter means that 
$$
\begin{cases}
f_{\gamma_a}(b_{10},\dots,b_{d0})=0 & \quad \text{with} \ b_{i0}\not=0 \ \text{for}\ 1\leq i\leq d,\\
q_j(b_{1j},\dots,b_{d_0j})+p_j((b_{i'j'})_{i',j'})=0 & \quad \text{for}\  1\leq j\leq k-1,\\
q_k(b_{1k},\dots,b_{d_0k})+p_k((b_{i'j'})_{i',j'})=1,
\end{cases}
$$
where $p_j$, for $1\leq j\leq k$, are polynomials in variables $b_{i'j'}$ with $i^{'}\leq d_0$ and $j'<j$, and
$$q_j(b_{1j},\dots,b_{d_0j})=\sum_{i=1}^{d_0}\frac{\partial f_{\gamma_a}}{\partial x_i}(b_{10},\dots,b_{d_00}, 0, \ldots, 0)b_{ij}.$$
Note that the function $f$ does not depend on $x_i$ for all $i>d_0$.

We consider the morphism $\pi: \mathscr X_{[d],a}^{(\ell(a)+k)}\to X_{\gamma_a}(0)$ which sends the $\varphi$ described previously to $(b_{10},\dots,b_{d_0})$. Since $\hat\mu$ acts trivially on $X_{\gamma_a}(0)$, we only need to prove that $\pi$ is a locally trivial fibration with fiber $\mathbb A_{\mathbb C}^{d(\ell(a)+k)-s(a)-k}$. For every $1\leq i\leq d_0$, we put
\begin{align}\label{trivialization}
U_i:=\Big\{(x_1,\dots,x_d)\in X_{\gamma_a}(0) \mid \frac{\partial f_{\gamma_a}}{\partial x_i}(x_1,\dots,x_d)\not=0\Big\}.
\end{align} 
The nondegeneracy of $f$ on the face $\gamma=\gamma_a$ gives us an open covering $\{U_1,\dots,U_{d_0}\}$ of $X_{\gamma}(0)$. We construct trivializations of $\pi$ as follows
$$\xymatrix{
\pi^{-1}(U_i) \ar[rr]^{\Phi_{U_i}}\ar@{->}[dr]_{\pi}&& U_i\times_{\mathbb{C}} \mathbb{A}_{\mathbb{C}}^{e}\ar@{->}[dl]^{\pr_1}\\
&U_i&       }
$$
where $e=\sum_{i=1}^d(\ell(a)-a_i+k)-k$ and we identify $\mathbb{A}_{\mathbb{C}}^{e}$ with the subvariety of $\mathbb{A}_{\mathbb{C}}^{\sum_{i=1}^d(\ell(a)-a_i+k)}$ defined by the equations $\widetilde b_{ij}=0$ for $1\leq j\leq k-1$ and $\widetilde b_{ik}=1$ in the coordinate system $(\widetilde b_{lj})$, and for $\varphi$ as previous, 
$$\Phi_{U_i}(\varphi)=((\widetilde b_{10},\dots,\widetilde b_{d0}),(\widetilde b_{lj})_{1\leq l\leq d, 1\leq j\leq \ell(a)-a_l+k}),$$
with $\widetilde b_{ij}=0$ if $1\leq j\leq k-1$, $\widetilde b_{ik}=1$, and $\widetilde b_{lj}=b_{lj}$ otherwise. Furthermore, the inverse map $\Phi_{U_i}^{-1}$ of $\Phi_{U_i}$ is also a regular morphism given explicitly as follows
$$\Phi_{U_i}^{-1}(\widetilde b_{lj})=\Big(\sum_{j=0}^{\ell(a)-a_1+k}b_{1j}t^{j+a_1},\dots,\sum_{j=0}^{\ell(a)-a_d+k}b_{dj}t^{j+a_d}\Big),$$
where $b_{lj}=\widetilde b_{lj}$ for either that $l\not=i$ or that $l=i$ and $k<j\leq \ell(a)-a_l+k$, and
$$b_{ij}=\frac{-p_j((b_{lj'})_{l\leq d_0, j'<j})-\sum_{l\leq d_0, l\not=i}(\partial f_{\gamma_a}/\partial x_l)(\widetilde b_{10},\dots,\widetilde b_{d0})\widetilde b_{lj}}{(\partial f_{\gamma_a}/\partial x_i)(\widetilde b_{10},\dots,\widetilde b_{d0})},$$
for $1\leq j\leq k-1$, and
$$b_{ik}=\frac{1-p_k((b_{lj'})_{l\leq d_0, j'<k})-\sum_{l\leq d_0, l\not=i}(\partial f_{\gamma_a}/\partial x_l)(\widetilde b_{10},\dots,\widetilde b_{d0})\widetilde b_{lk}}{(\partial f_{\gamma_a}/\partial x_i)(\widetilde b_{10},\dots,\widetilde b_{d0})}.$$
This proves that $\pi$ is a locally trivial fibration with fiber $\mathbb A_{\mathbb C}^e$.
\end{proof}


\subsection{Motivic nearby cycles}
We still use the notation introduced previously. In particular, we denotes by $F$ (resp. $K$) the set of all the faces (resp. compact faces) of $\Gamma$. For $0\leq m\leq d$, we denote by $F(m)$ the set of $\gamma\in F$ such that if $\gamma=\gamma_a$ then $a_i\geq 1$ for all $i\in J_{\gamma}\cap [m+1,d]$. Note that $F(0)=K$.

\begin{theorem}\label{thm4.1}
Let $f\in \mathbb C[x_1,\dots,x_d]$, and let $d_1, d_2 \in\mathbb N$ such that $d=d_1+d_2$. The below identities hold in $\mathscr{M}_{\mathbb C}^{\hat\mu}$.
\begin{itemize}
\item[(i)] If $f$ is Newton nondegenerate and if there is a closed immersion $\iota: \mathbb A_{\mathbb C}^{d_1}\hookrightarrow X_0$, then
$$\int_{\mathbb A_{\mathbb C}^{d_1}}\iota^*\mathscr{S}_f=\sum_{\gamma\in F(d_1)}(-1)^{|J_{\gamma}|+1-\dim(\gamma)}\left([X_{\gamma}(1)]-[X_{\gamma}(0)]\right).$$ 
		
\item[(ii)] If $f$ is nondegenerate in the sense of Kouchnirenko and $f(O)=0$, then
$$\mathscr{S}_{f,O}=\sum_{\gamma\in K}(-1)^{|J_{\gamma}|+1-\dim(\gamma)}\left([X_{\gamma}(1)]-[X_{\gamma}(0)]\right).$$ 
\end{itemize}
\end{theorem}

\begin{proof}
(i) By the decomposition (\ref{decompXn}) and by Theorem \ref{mainthm1}, we have 
\begin{align*}
&\int_{\mathbb A_{\mathbb C}^{d_1}}\iota^*[\mathscr X_n(f)]\L^{-nd}\\
&\quad =\sum_{\gamma\in F(d_1)}\Big(\sum_{(=)}[X_{\gamma}(1)]\mathbb{L}^{(|J_{\gamma}|-d)\ell_{J_{\gamma}}(a)-s(a)}+\sum_{(<)}[X_{\gamma}(0)]\mathbb{L}^{(|J_{\gamma}|-d)(\ell_{J_{\gamma}}(a)+k)-s(a)-k}\Big),
\end{align*}
where the sum $\sum_{(=)}$ runs over the pairs $(J_{\gamma},a)$ with $a\in \sigma_{J_{\gamma},\gamma}\cap \Delta_{J_{\gamma}}^{(n,0)}$, and the sum $\sum_{(<)}$ runs over the pairs $(J_{\gamma},a)$ with $a\in \sigma_{J_{\gamma},\gamma}\cap \Delta_{J_{\gamma}}^{(n,k)}$ for $k\geq 1$. By the rationality of $Z_f(T)$ proved in \cite{DL1}, we see that $\int_{\mathbb A_{\mathbb C}^{d_1}}\iota^*$ commutes with the sum $\sum_{n\geq 1}$ in $Z_f(T)$, thus
$$\int_{\mathbb A_{\mathbb C}^{d_1}}\iota^*Z_f(T)=\sum_{\gamma\in F(d_1)}\Big([X_{\gamma}(1)]+[X_{\gamma}(0)]\frac{\mathbb L^{|J_{\gamma}|-d-1}T}{1-\mathbb L^{|J_{\gamma}|-d-1}T}\Big)    \sum_{a\in\sigma_{J_{\gamma},\gamma}}\mathbb{L}^{-s(a)}(\mathbb{L}^{|J_{\gamma}|-d}T)^{\ell_{J_{\gamma}}(a)}.$$
By \cite[Lemme 2.1.5.]{G} we have 
$$\lim_{T\to \infty}\sum_{a\in\sigma_{J_{\gamma},\gamma}}\mathbb{L}^{-s(a)}(\mathbb{L}^{|J_{\gamma}|-d}T)^{\ell_{J_{\gamma}}(a)}=(-1)^{\dim(\sigma_{J_{\gamma},\gamma})}=(-1)^{|J_{\gamma}|-\dim(\gamma)},$$
from which 
$$\int_{\mathbb A_{\mathbb C}^{d_1}}\iota^*\mathscr{S}_f=-\lim\limits_{T\to \infty}\int_{\mathbb A_{\mathbb C}^{d_1}}\iota^*Z_f(T)=\sum_{\gamma\in F(d_1)}(-1)^{|J_{\gamma}|+1-\dim(\gamma)}\left([X_{\gamma}(1)]-[X_{\gamma}(0)]\right).$$
	
(ii) The study is local at $O$, so we only work with all $a\in \sigma_{J_{\gamma},\gamma}\cap \Delta_{J_{\gamma}}^{(n,k)}$ for $\gamma\in K$, i.e. we only need the condition that $f$ is nondegenerate in the sense of Kouchnirenko. Then the argument in (i) runs for $d_1=0$, thus (ii) follows. 
\end{proof}

\begin{remark}
This result revisits Guibert's work in \cite[Section 2.1]{G} for Newton nondegenerate polynomials $f$ in a more general setting. Indeed, in \cite{G} Guibert requires $f$ to have the form $\sum_{\nu\in \mathbb N_{>0}^d}a_{\nu}x^{\nu}$, while we do not. Recently, Bultot-Nicaise in \cite[Theorems 7.3.2, 7.3.5]{BN} provide a new approach to the motivic zeta functions $Z_f(T)$ and $Z_{f,O}(T)$ using log smooth models. 
\end{remark}


We now consider the relation between the motivic nearby cycles of $f$ and that of a restriction of $f$. We write $\widetilde f$ for $f^{[d-1]}$, i.e., $\widetilde f(x_1,\ldots,x_{d-1})=f(x_1,\ldots,x_{d-1},0)$, and write $\widetilde O$ for the origin of $\mathbb C^{d-1}$. Let $\widetilde X_0$ be the zero locus of $\widetilde f$, which can be embedded into $X_0$. The following theorem may be partially considered as a motivic analogue of D. T. L\^e's work on a monodromy relation of a complex singularity and its restriction to a generic hyperplane (see \cite{LDT}). 

\begin{theorem}\label{restricting}
Let $f\in \mathbb C[x_1,\dots,x_d]$, and let $d_1, d_2 \in\mathbb N$ such that $d=d_1+d_2$. The below identities hold in $\mathscr{M}_{\mathbb C}^{\hat\mu}$.
\begin{itemize}
\item[(i)] Suppose that $f$ is Newton nondegenerate and that $\mathbb A_{\mathbb C}^{d_1}$ is embedded in $\widetilde X_0\subseteq X_0$. Denote by $\iota$ the inclusion of $\mathbb A_{\mathbb C}^{d_1}$ in both $X_0$ and $\widetilde X_0$. Then 
$$\int_{\mathbb A_{\mathbb C}^{d_1}}\iota^*\mathscr{S}_f=\int_{\mathbb A_{\mathbb C}^{d_1}}\iota^*\mathscr{S}_{\widetilde{f}}+\int_{\mathbb A_{\mathbb C}^{d_1}}\iota^*\mathscr S_{f,x_d}^{\Delta}.$$ 
		
\item[(ii)] If $f$ is nondegenerate in the sense of Kouchnirenko and $f(O)=0$, then
$$\mathscr{S}_{f,O}=\mathscr{S}_{\widetilde f,\widetilde O}+\mathscr S_{f,x_d,O}^{\Delta}.$$ 
\end{itemize}
\end{theorem}


\begin{proof}
As in the proof of Theorem \ref{thm4.1}, the proof method for (ii) is the same as for (i) but with $d_1=0$. So, we are only going to prove (i). By the definition of $(n,m)$-iterated contact loci, we have
\begin{align*}
\mathscr{X}_{n,m,O}(f,x_d)=\bigsqcup_{(J_{\gamma},a)\in \mathcal P_n. a_d=m}\mathscr X_{J_{\gamma},a}^{(n)},
\end{align*}
We deduce from Section \ref{remark} and the method in the proof of Theorem \ref{thm4.1} that $\int_{\mathbb A_{\mathbb C}^{d_1}}\iota^*Z_{f,x_d}^{\Delta}(T)$ is equal in $\mathscr M_{\mathbb C}^{\hat\mu}[[T]]$ to 
$$\sum_{\gamma\in F(d_1)}\sum_{\begin{smallmatrix} a\in \sigma_{J_{\gamma},\gamma}\\ \ell_{J_{\gamma}}(a)\geq a_d\geq 1 \end{smallmatrix}}\left[\mathscr X_{J_{\gamma},a}^{(\ell_{J_{\gamma}}(a))}\right]\Lbb^{-d\ell_{J_{\gamma}}(a)}T^{\ell_{J_{\gamma}}(a)}$$
plus
$$\sum_{\gamma\in F(d_1)}\sum_{\begin{smallmatrix} a\in \sigma_{J_{\gamma},\gamma}\\ \ell_{J_{\gamma}}(a)+k\geq a_d\geq 1, k\geq 1 \end{smallmatrix}}\left[\mathscr X_{J_{\gamma},a}^{(\ell_{J_{\gamma}}(a)+k)}\right]\Lbb^{-d(\ell_{J_{\gamma}}(a)+k)}T^{\ell_{J_{\gamma}}(a)+k}.$$
We apply Theorem \ref{mainthm1} to $\gamma\in F(d_1)$ and $a\in \sigma_{J_{\gamma},\gamma}$. If $d\in J_{\gamma}$, then $\ell_{J_{\gamma}}(a)+k\geq a_d\geq 1$ automatically for any $k\in \mathbb N$. If $d\not\in J_{\gamma}$, then the inequalities $\ell_{J_{\gamma}}(a)+k\geq a_d\geq 1$ is in the situation of \cite[Lemma 2.10]{GLM1}, in which the corresponding series has the limit zero. Thus
$$\int_{\mathbb A_{\mathbb C}^{d_1}}\iota^*\mathscr S_{f,x_d}^{\Delta}=\sum_{\gamma\in F(d_1), d\in J_{\gamma}}(-1)^{|J_{\gamma}|+1-\dim(\gamma)}\left([X_{\gamma}(1)]-[X_{\gamma}(0)]\right),$$
from which, by Theorem \ref{thm4.1},  
$$\int_{\mathbb A_{\mathbb C}^{d_1}}\iota^*\mathscr S_{f}=\sum_{\gamma\in F(d_1), d\not\in J_{\gamma}}(-1)^{|J_{\gamma}|+1-\dim(\gamma)}\left([X_{\gamma}(1)]-[X_{\gamma}(0)]\right)+\int_{\mathbb A_{\mathbb C}^{d_1}}\iota^*\mathscr S_{f,x_d}^{\Delta}.$$
The condition $d\not\in J_{\gamma}$ means that $J_{\gamma}\subseteq [d-1]$, hence the first sum $\sum_{\gamma\in K, d\not\in J_{\gamma}}$ in the above decomposition of $\mathscr S_{f,O}$ is nothing but $\mathscr{S}_{\widetilde f,\widetilde O}$, again by Theorem \ref{thm4.1}.
\end{proof}

To state and prove the below corollary, we denote by $O_l$ the origin of the affine space $\mathbb C^{d_l}$ for $1\leq l \leq 3$.

\begin{corollary}[Integral identity conjecture]\label{IIC}
Let $(x,y,z)$ be the standard coordinates of the affine space $\mathbb C^d=\mathbb C^{d_1}\times \mathbb C^{d_2}\times \mathbb C^{d_3}$. Let $f$ be a Newton nondegenerate polynomial in $\mathbb C[x,y,z]$ such that $f(O)=0$ and $f(\lambda x, \lambda^{-1} y,z)=f(x,y,z)$ for all $\lambda$ in $\mathbb C^*$. Then the integral identity $\int_{\mathbb A_{\mathbb C}^{d_1}}\iota^*\mathscr{S}_f=\L^{d_1}\mathscr{S}_{f|_{\mathbb C^{d_3}},O_3}$ holds in $\mathscr M_{\mathbb C}^{\hat\mu}$, with $\iota$ the inclusion of $\mathbb A_{\mathbb C}^{d_1}$ in $X_0$.
\end{corollary}

\begin{proof}
Put $f_0:=f$; and for $1\leq j\leq d_2$, let $f_j$ be the restriction of $f_{j-1}$ to $y_j^{-1}(0)$. For abuse of notation we use $\iota$ commonly for the inclusions of $\mathbb A_{\mathbb C}^{d_1}$ in the varieties $f_j^{-1}(0)$. By applying Theorem \ref{restricting} (several times for part (i)) we have
$$\int_{\mathbb A_{\mathbb C}^{d_1}}\iota^*\mathscr S_f=\int_{\mathbb A_{\mathbb C}^{d_1}}\iota^*\mathscr S_{f_{d_2}}+\sum_{j=1}^{d_2}\int_{\mathbb A_{\mathbb C}^{d_1}}\iota^*\mathscr S_{f_{j-1},y_j}^{\Delta}.$$
The hypothesis on $f$ implies that $f(x,0,z)=f(0,0,z)$, hence
$$\int_{\mathbb A_{\mathbb C}^{d_1}}\iota^*\mathscr S_{f_{d_2}}=\L^{d_1}\mathscr{S}_{f|_{\mathbb C^{d_3}},O_3}.$$ 
It thus remains to prove that $\int_{\mathbb A_{\mathbb C}^{d_1}}\iota^*\mathscr S_{f_{j-1},y_j}^{\Delta}=0$ in $\mathscr M_{\mathbb C}^{\hat\mu}$, for every $1\leq j\leq d_2$, and it suffices to check this with $j=1$. Using the proof of Theorem \ref{restricting} we have
\begin{align}\label{IICf}
\int_{\mathbb A_{\mathbb C}^{d_1}}\iota^*\mathscr{S}_{f,y_1}^{\Delta}=\sum_{\gamma\in F(d_1), d_1+1\in J_{\gamma}}(-1)^{|J_{\gamma}|+1-\dim(\gamma)}\left([X_{\gamma}(1)]-[X_{\gamma}(0)]\right).
\end{align}
For all $\gamma\in F(d_1)$ with $J_{\gamma}$ containing $d_1+1$, write $J_{\gamma}=J_1\sqcup J_2\sqcup J_3$ with $J_1\subseteq [d_1]$, $J_2\subseteq d_1+[d_2]$ and  $J_3\subseteq d_1+d_2+[d_3]$. Observe that $J_2$ is nonempty because $d_1+1\in J_2$, it thus implies from the hypothesis on $f$ that $J_1$ is also nonempty. Write $\gamma=\widetilde\gamma +\mathbb R_{\geq 0}^I$ with $\widetilde\gamma$ a compact face, then $I\subseteq [d_1]$, $J_{\gamma}=J_{\widetilde\gamma}$ and $\dim(\gamma)=|I|+\dim(\widetilde\gamma)$. Since $f(\lambda x,\lambda^{-1}y,z)=f(x,y,z)$ for all $\lambda\in \mathbb C^*$, we can write $f$ in the form
$$f(x,y,z)=\sum_{s(\alpha)=s(\beta)}a_{\alpha\beta\gamma}x^{\alpha}y^{\beta}z^{\gamma}.$$
Since the hyperplane $\{s(\alpha)=s(\beta)\}$ in $\mathbb R_{>0}^d$ goes through the origin, its intersection with $\partial\Gamma$ has codimension at least $2$. This together with $J_1\not=\emptyset$ implies that $I\not=\emptyset$. Using Lemmas 3.2, 3.3, 4.7 in \cite{Thuong1} we can rewrite (\ref{IICf}) as follows
\begin{align*}
\int_{\mathbb A_{\mathbb C}^{d_1}}\iota^*\mathscr{S}_{f,y_1}^{\Delta}=\sum_{\widetilde\gamma\in K}(-1)^{|J_{\widetilde\gamma}|+1-\dim(\widetilde\gamma)}\left([X_{\widetilde\gamma}(1)]-[X_{\widetilde\gamma}(0)]\right)\sum_{I\subseteq M_{\widetilde\gamma}}(-1)^{|I|},
\end{align*}
in which there exists for each $\widetilde\gamma \in K$ a unique $M_{\widetilde\gamma}\not=\emptyset$ such that the previous identity holds. Since $\sum_{I\subseteq M_{\widetilde\gamma}}(-1)^{|I|}=0$, we get $\int_{\mathbb A_{\mathbb C}^{d_1}}\iota^*\mathscr{S}_{f,y_1}^{\Delta}=0$ in $\mathscr M_{\mathbb C}^{\hat\mu}$. The corollary has been proved.
\end{proof}

\begin{remark}
In fact, the integral identity conjecture states for formal series. It plays a crucial role in Kontsevich-Soibelman's theory of motivic Donaldson-Thomas invariants for noncommutative Calabi-Yau threefolds (see \cite{KS}). We refer to \cite{Thuong1}, \cite{Thuong}, \cite{NP}, \cite{LN} and \cite{Thuong2} for proofs of different versions of the conjecture.
\end{remark}


\section{Cohomology groups of contact loci of nondegenerate singularities}\label{Sec5-cloci}
As before, let $f$ be in $\mathbb C[x_1,\dots,x_d]$ which vanishes at $O$. In this section, we always assume that $f$ is nondegenerate in the sense of Kouchnirenko (say for short that $f$ is {\it nondegenerate}).

\subsection{Borel-Moore homology groups of contact loci}
Consider the following decomposition of $\mathscr X_{n,O}(f)$, which is the local version of (\ref{decompXn}),
\begin{align}\label{otherdecomp}
 \mathscr X_{n,O}(f)=\bigsqcup_{(J,a)\in \widetilde{\mathcal P}_n}\mathscr{X}_{J,a}^{(n)},
\end{align}
where $\widetilde{\mathcal P}_n$ is the set of all the pairs $(J_{\gamma},a)$ such that $\gamma \in K$ and $a\in \bigsqcup_{k\in \mathbb N}\left(\sigma_{J_{\gamma},\gamma}\cap \Delta_{J_{\gamma}}^{(n,k)}\right)$. Remark that, as $\gamma \in K$, we have $\sigma_{J_{\gamma},\gamma}\cap \Delta_{J_{\gamma}}^{(n,k)}=(\mathbb N^*)^{J_{\gamma}}\cap\sigma_{J_{\gamma},\gamma}\cap \Delta_{J_{\gamma}}^{(n,k)}$.
We consider an ordering in $\widetilde{\mathcal P}_n$ as follows: for $(J, a)$ and $(J', a')$ in $\widetilde{\mathcal P}_n$, $(J', a')\leq (J, a)$ if and only if $J'\subseteq J$ and $a_i\leq a'_i$ for all $i\in J'$, where $a=(a_i)_{i\in J}$ and $a'=(a'_i)_{i\in J'}$. The following lemma is straightforward.

\begin{lemma}\label{lem5.1}
Let $n$ be in $\mathbb N^*$. For all $(J, a)$ and $(J', a')$ in $\widetilde{\mathcal P}_n$, the following are equivalent:
\begin{align*}
\mathrm{(i)} \ (J', a')\leq (J, a), \qquad\quad \mathrm{(ii)} \ \mathscr{X}_{J',a'}^{(n)}\subseteq \overline{\mathscr X_{J,a}^{(n)}}, \qquad\quad \mathrm{(iii)} \ \mathscr{X}_{J',a'}^{(n)}\cap \overline{\mathscr X_{J,a}^{(n)}}\neq \emptyset,
\end{align*}
the closure taken in the usual topology. Consequently, $\overline{\mathscr X_{J,a}^{(n)}}= \bigsqcup\limits_{(J', a')\leq (J, a)} \mathscr{X}_{J',a'}^{(n)}$ for $(J,a)\in \widetilde{\mathcal P}_n$.
 \end{lemma}
 

 
 
Consider the function $\eta: \widetilde{\mathcal P}_n\to \mathbb{Z}$ given by $\eta(J,a)=\dim_{\mathbb{C}}\mathscr{X}_{J,a}^{(n)}$, for every $n\in \mathbb N^*$. Put 
\begin{align*}
S_p:=  \bigsqcup_{(J, a)\in \widetilde{\mathcal P}_n, \eta(J,a)\leq p}\mathscr{X}_{J,a}^{(n)},
\end{align*}
for $p\in \mathbb N$. The below is a property of $\eta$ and $S_p$'s.

\begin{lemma}\label{lem5.3}
Let $n$ be in $\mathbb N^*$.
\begin{itemize}
\item[(i)]
If $(J', a')\leq (J, a)$ in $\widetilde{\mathcal P}_n$, then $\eta(J', a')\leq \eta(J, a)$.

\item[(ii)] For all $p\in \mathbb N$, $S_p$ are closed and $S_p\subseteq S_{p+1}$. As a consequence, there is a filtration of $\mathscr X_{n,O}(f)$ by closed subspaces:
$$\mathscr X_{n,O}(f)=S_{d_0}\supseteq S_{d_0-1}\supseteq \cdots \supseteq S_{-1}=\emptyset,$$
where $d_0$ denotes the $\mathbb C$-dimension of $\mathscr X_{n,O}(f)$.
\end{itemize}
\end{lemma}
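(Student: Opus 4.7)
\medskip
\noindent\textbf{Proof proposal.} The plan is to reduce both statements to Lemma \ref{lem5.1} and Lemma \ref{lem5.2}, so that essentially no new geometry is needed beyond the stratification $\mathscr X_{n,O}(f)=\bigsqcup_{(J,a)\in\mathcal P_n}\mathscr X_{J,a}$ and its order-theoretic structure on $\mathcal P_n$.

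For (i), given $(J',a')\leq (J,a)$ in $\mathcal P_n$, Lemma \ref{lem5.1} gives $\mathscr X_{J',a'}\subseteq \overline{\mathscr X}_{J,a}$. Since $\mathscr X_{J,a}$ is locally closed and irreducible components behave well under taking closures, one has $\dim_{\mathbb C}\overline{\mathscr X}_{J,a}=\dim_{\mathbb C}\mathscr X_{J,a}=\sigma(J,a)$, and any locally closed subvariety of $\overline{\mathscr X}_{J,a}$ has dimension at most $\sigma(J,a)$. Hence $\sigma(J',a')\leq\sigma(J,a)$. (Alternatively, the explicit dimension formulas coming from Theorem \ref{mainthm1}, namely $\dim\mathscr X_{\ell_J(a),a}(f)=|J|\ell_J(a)-s(a)+|J|-1$ and $\dim\mathscr X_{\ell_J(a)+k,a}(f)=|J|(\ell_J(a)+k)-s(a)-k+|J|-1$, could be compared directly, but the topological argument is cleaner and avoids a case analysis.)

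For (ii), the inclusion $S_p\subseteq S_{p+1}$ is immediate from the definition (\ref{Ssubp}). To establish closedness, I would prove the identity
\begin{equation*}
S_p \;=\; \bigcup_{\sigma(J,a)\leq p}\overline{\mathscr X}_{J,a}.
\end{equation*}
The inclusion $\subseteq$ is trivial. For $\supseteq$, fix $(J,a)$ with $\sigma(J,a)\leq p$; by Lemma \ref{lem5.2}, $\overline{\mathscr X}_{J,a}=\bigsqcup_{(J',a')\leq(J,a)}\mathscr X_{J',a'}$, and by part (i) every such $(J',a')$ satisfies $\sigma(J',a')\leq\sigma(J,a)\leq p$, so each stratum appearing in the closure lies in $S_p$. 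Since $\mathcal P_n$ is finite (the coordinates of $a$ are bounded by $n$, and $J\subseteq[d]$), the right-hand side is a finite union of closed sets, hence $S_p$ is closed.

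The filtration statement is then a formal consequence: setting $d_0:=\dim_{\mathbb C}\mathscr X_{n,O}(f)=\max_{(J,a)\in\mathcal P_n}\sigma(J,a)$, we have $S_{d_0}=\mathscr X_{n,O}(f)$ and $S_{-1}=\emptyset$, with each $S_p$ closed by the argument above. The main (and essentially only) point of potential difficulty is verifying the claim that $S_p$ coincides with the union of closures of its maximal strata, but this follows cleanly once Lemmas \ref{lem5.1} and \ref{lem5.2} have been combined with (i); no further combinatorial analysis of $\Gamma$ is required.
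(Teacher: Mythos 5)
Your proposal is correct and takes essentially the same route as the paper: both establish closedness of $S_p$ by combining Lemma \ref{lem5.2} (to decompose $\overline{\mathscr X}_{J,a}$ into strata) with part (i) (to bound their dimensions by $p$), the only cosmetic difference being that you prove $S_p=\bigcup_{\sigma(J,a)\leq p}\overline{\mathscr X}_{J,a}$ directly while the paper computes $\overline{S}_p$ and shows $\overline{S}_p\subseteq S_p$. Your dimension argument for (i), which the paper dismisses as trivial, is a valid and clean justification.
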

 
\begin{proof}
The first statement (i) is trivial. To prove (ii) we take the closure of $S_p$; then using Lemma \ref{lem5.1} we get 
$$\overline{S}_p=\bigcup_{\eta(J, a)\leq p} \overline{\mathscr X_{J,a}^{(n)}}= \bigcup_{\eta(J, a)\leq p} \bigsqcup_{(J',a')\leq (J, a)}\mathscr{X}_{J',a'}^{(n)}.$$
This decomposition combined with (i) implies that $\overline{S}_p\subseteq S_p$, which proves  that $S_p$ is a closed subspace. The remaining statements of (ii) are trivial. 
\end{proof}
 
A main result of this section is the following theorem. To express the result, we work with the Borel-Moore homology $H_*^{\BM}$. 

\begin{theorem}\label{thm5.4}
Let $f\in \mathbb C[x_1,\dots,x_d]$ be nondegenerate,  $n\in \mathbb N^*$ and $f(O)=0$. Then there is a spectral sequence
$$E_{p,q}^1:=\bigoplus_{(J, a)\in \widetilde{\mathcal P}_n,\eta(J, a)=p}H^{\BM}_{p+q}(\mathscr{X}_{J,a}^{(n)})\Longrightarrow  H^{\BM}_{p+q}(\mathscr X_{n,O}(f)).$$
\end{theorem}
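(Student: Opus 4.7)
The plan is to construct the spectral sequence from the filtration by closed subvarieties
$$\emptyset=S_{-1}\subseteq S_0\subseteq\cdots\subseteq S_{d_0}=\mathscr{X}_{n,O}(f)$$
provided by Lemma \ref{lem5.3}(ii). For every $p$, the closed pair $(S_p,S_{p-1})$ with locally closed complement $U_p:=S_p\setminus S_{p-1}$ gives the long exact sequence in Borel--Moore homology
$$\cdots\to H^{\BM}_{p+q}(S_{p-1})\to H^{\BM}_{p+q}(S_p)\to H^{\BM}_{p+q}(U_p)\to H^{\BM}_{p+q-1}(S_{p-1})\to\cdots.$$
Assembling these into an exact couple with $D^1_{p,q}=H^{\BM}_{p+q}(S_p)$ and $E^1_{p,q}=H^{\BM}_{p+q}(U_p)$, the standard spectral sequence of a finite filtration by closed subspaces converges to $H^{\BM}_{p+q}(\mathscr{X}_{n,O}(f))$.

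To identify this $E^1$ page with the announced direct sum, I would observe that set-theoretically $U_p=\bigsqcup_{\sigma(J,a)=p}\mathscr{X}_{J,a}$ by the definition of $S_p$, and then upgrade this to a decomposition of $U_p$ into clopen pieces. Once each stratum at level $p$ is known to be clopen in $U_p$, additivity of Borel--Moore homology on a finite disjoint union of clopen pieces yields
$$H^{\BM}_{p+q}(U_p)=\bigoplus_{\sigma(J,a)=p}H^{\BM}_{p+q}(\mathscr{X}_{J,a}).$$
By Lemma \ref{lem5.1}, the only way $\mathscr{X}_{J',a'}$ could meet the closure of a distinct $\mathscr{X}_{J,a}$ is if $(J',a')<(J,a)$ strictly in $\mathcal{P}_n$. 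Thus the problem reduces to the key claim: whenever $(J',a')<(J,a)$ strictly in $\mathcal{P}_n$, one has $\sigma(J',a')<\sigma(J,a)$ strictly.

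I would establish this strict dimension drop as follows. Inside the jet space $\mathscr{L}_n(\mathbb{A}_{\mathbb{C}}^d)$, the variety $\mathscr{X}_{J,a}$ is cut out by the closed conditions $f(\varphi)\equiv t^n\bmod t^{n+1}$ and $x_i(\varphi)\equiv 0$ for $i\notin J$ together with the locally closed conditions $\ord_t x_j(\varphi)=a_j$ for $j\in J$, so it is locally closed; in particular it is open and dense in its closure $\overline{\mathscr{X}_{J,a}}$. Theorem \ref{mainthm1} realizes $\mathscr{X}_{J,a}$ as a Zariski locally trivial affine-space bundle over either $X_{\gamma_a,J}(1)$ or $X_{\gamma_a,J}(0)$, both smooth hypersurfaces of dimension $|J|-1$ in $\mathbb{G}_{m,\mathbb{C}}^J$ by the nondegeneracy of $f$; hence $\mathscr{X}_{J,a}$ is equidimensional of dimension $\sigma(J,a)$, and so is $\overline{\mathscr{X}_{J,a}}$. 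Since $\mathscr{X}_{J,a}$ is open and dense in its closure, it meets every irreducible component of $\overline{\mathscr{X}_{J,a}}$ in a nonempty open subset, which forces the boundary $\overline{\mathscr{X}_{J,a}}\setminus\mathscr{X}_{J,a}$ to be a proper closed subset of each component, and therefore to have dimension strictly less than $\sigma(J,a)$. Because Lemma \ref{lem5.2} places $\mathscr{X}_{J',a'}$ inside this boundary whenever $(J',a')<(J,a)$ strictly, we obtain $\sigma(J',a')<\sigma(J,a)$, which is the claim. The main obstacle is precisely this strict dimension drop; once it is in hand, the remainder is a routine packaging of the spectral sequence of a filtered space with Borel--Moore additivity on a finite clopen disjoint union.
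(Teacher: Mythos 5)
Your proposal matches the paper's proof in all essentials: both build the exact couple $\langle A,E\rangle$ with $A_{p,q}=H^{\BM}_{p+q}(S_p)$ and $E_{p,q}=H^{\BM}_{p+q}(S_p\setminus S_{p-1})$ from the filtration of Lemma~\ref{lem5.3}(ii) via the Gysin long exact sequence in Borel--Moore homology, and then identify the $E^1$-page by splitting $S_p\setminus S_{p-1}$ into its clopen strata. The one place where you go beyond the paper is that you actually justify this clopen splitting, which the paper dispatches with the bare assertion that ``the closure of all the terms are disjoint.'' Your strict dimension drop --- that $(J',a')\lneq(J,a)$ in $\mathcal{P}_n$ forces $\sigma(J',a')<\sigma(J,a)$ --- is the right claim to prove, and your derivation of it from the equidimensionality of each stratum (which Theorem~\ref{mainthm1} supplies, via the affine-bundle structure over the smooth hypersurfaces $X_{\gamma,J}(1)$ or $X_{\gamma,J}(0)$) together with the openness and density of a locally closed set in its closure is correct and is exactly what makes each $\mathscr{X}_{J,a}$ with $\sigma(J,a)=p$ closed, hence clopen, in $S_p\setminus S_{p-1}$. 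So the argument is sound and follows the paper's route, while filling in the step the paper leaves implicit.
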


\begin{proof}
We have the following the Gysin exact sequence
$$\cdots \to H^{\BM}_{p+q}(S_{p-1})\to H^{\BM}_{p+q}(S_{p})\to H^{\BM}_{p+q}(S_{p}\setminus S_{p-1})\to H^{\BM}_{p+q-1}(S_{p-1})\to \cdots .$$
Put 
$$A_{p, q}:= H^{\BM}_{p+q}(S_{p}),\quad E_{p,q}:=H^{\BM}_{p+q}(S_{p}\setminus S_{p-1}).$$
Then we have the bigraded $\mathbb{Z}$-modules $A:= \bigoplus_{p,q} A_{p,q}$ and $E:= \bigoplus_{p,q} E_{p,q}$. The previous exact sequence induces the exact couple
$\langle A, E; h, i, j\rangle,$
where $h: A\to A$ is induced from the inclusions $S_m\subseteq S_{m+1}$, $i: A\to E$ and $j: E\to A$ are induced from the above exact sequence. Since the filtration in Lemma \ref{lem5.3} (ii) is finite, that exact couple gives us the following spectral sequence
$$E^1_{p,q}:=E_{p,q}=H^{\BM}_{p+q}(S_{p}\setminus S_{p-1})\Longrightarrow H^{\BM}_{p+q}(\mathscr X_{n,O}(f)).$$

On the other hand, we have
$$S_p\setminus S_{p-1}= \bigsqcup_{\eta(J, a)=p} \mathscr{X}_{J,a}^{(n)}.$$
One claims that for two different pairs $(J, a), (J^{'}, a^{'})$ in $\widetilde{\mathcal P}_n$ which $\eta(J, a)=\eta(J^{'}, a^{'})=p$ then 
$$\mathscr{X}_{J,a}^{(n)}\cap \overline{\mathscr X_{J^{'}, a^{'}}^{(n)}}= \emptyset\quad \textrm{and}\quad 
\mathscr X_{J^{'}, a^{'}}^{(n)}\cap \overline{\mathscr{X}_{J,a}^{(n)}}= \emptyset.$$
Indeed, if otherwise,  suppose that 
$$\mathscr{X}_{J^{'}, a^{'}}^{(n)}\cap \overline{\mathscr X_{J,a}^{(n)}}\neq \emptyset.$$
By Lemma \ref{lem5.1}, we obtain that $\mathscr{X}_{J^{'}, a^{'}}^{(n)}\subseteq \overline{\mathscr X_{J,a}^{(n)}}$, but $\mathscr{X}_{J^{'}, a^{'}}^{(n)}$ and $\mathscr X_{J,a}^{(n)}$ are two disjoint smooth manifolds, then $\eta(J^{'}, a^{'}) < \eta(J, a)$. This is a contradiction. 

Therefore, in the set $S_p\setminus S_{p-1}$ with the induced topology, each set $\mathscr{X}_{J,a}^{(n)}$ which $ \eta(J, a)=p$ is open, hence, is also closed. This implies that 
$$H^{\BM}_{p+q}(S_{p}\setminus S_{p-1})= \bigoplus_{(J, a)\in \widetilde{\mathcal P}_n,\eta(J, a)=p}H^{\BM}_{p+q}(\mathscr{X}_{J,a}^{(n)}).$$
The theorem is then proved.
\end{proof}
 

 \begin{corollary}
With the hypothesis as in Theorem \ref{thm5.4}, there is an isomorphism of groups
$$H^{\BM}_{2d_0}(\mathscr X_{n,O}(f))\cong \mathbb{Z}^s,$$
where $s$ is the number of connected components of $\mathscr X_{n,O}(f)$ which have the same complex dimension $d_0$ as $\mathscr X_{n,O}(f)$.
 \end{corollary}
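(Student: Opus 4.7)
The plan is to collapse the spectral sequence of Theorem \ref{thm5.4} on the anti-diagonal $p+q=2d_0$. The key input is the standard dimension bound $H^{\BM}_j(X)=0$ for $j>2\dim_{\mathbb C}X$: applied termwise to
$$E^1_{p,q} \;=\; \bigoplus_{\sigma(J,a)=p} H^{\BM}_{p+q}(\mathscr{X}_{J,a}),$$
it yields $E^1_{p,q}=0$ whenever $q>p$. On the line $p+q=2d_0$, this leaves only $E^1_{d_0,d_0}$ possibly nonzero, since $q=2d_0-p>p$ for $p<d_0$ and the indexing set is empty for $p>d_0$ by maximality of $d_0$.

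Next I would show that $E^\infty_{d_0,d_0}=E^1_{d_0,d_0}$. For every $r\geq 1$, the outgoing differential $d_r\colon E^r_{d_0,d_0}\to E^r_{d_0-r,\,d_0+r-1}$ has target a subquotient of $E^1_{d_0-r,\,d_0+r-1}=0$ (since $d_0+r-1>d_0-r$ under the same bound), while the incoming differential $d_r\colon E^r_{d_0+r,\,d_0-r+1}\to E^r_{d_0,d_0}$ has zero source because no stratum exceeds dimension $d_0$. Hence
$$H^{\BM}_{2d_0}\bigl(\mathscr X_{n,O}(f)\bigr) \;\cong\; \bigoplus_{\sigma(J,a)=d_0} H^{\BM}_{2d_0}(\mathscr{X}_{J,a}).$$

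To finish, I would invoke Theorem \ref{mainthm1}: each top stratum $\mathscr{X}_{J,a}$ is either of the form $X_{\gamma,\widetilde J}(1)\times_{\mathbb C}\mathbb A_{\mathbb C}^{|J|\ell_J(a)-s(a)}$ or a Zariski-locally trivial affine-space fibration over $X_{\gamma,\widetilde J}(0)$; by the nondegeneracy of $f$ the bases are smooth on $\mathbb G_{m,\mathbb C}^J$, so $\mathscr{X}_{J,a}$ is smooth of pure complex dimension $d_0$. Its top Borel--Moore homology is therefore free of rank equal to the number of its connected components, and summing gives a free abelian group whose rank matches the count $s$ of top-dimensional components of $\mathscr X_{n,O}(f)$.

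The main obstacle I anticipate is the last bookkeeping step, identifying $\sum_{\sigma(J,a)=d_0}c(J,a)$ with the number $s$ of connected components of $\mathscr X_{n,O}(f)$ of dimension $d_0$. By Lemma \ref{lem5.3}(i), the open subset $S_{d_0}\setminus S_{d_0-1}=\bigsqcup_{\sigma(J,a)=d_0}\mathscr{X}_{J,a}$ is Zariski-open in $\mathscr X_{n,O}(f)$, and by Lemma \ref{lem5.2} it meets every top-dimensional component. The identification then reduces to showing that each connected component of this open stratum lies in a unique top-dimensional component of $\mathscr X_{n,O}(f)$ and that distinct such components of the open stratum do not get glued through lower strata; this is where I would invest the technical work.
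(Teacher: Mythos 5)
Your spectral sequence argument is exactly the natural way to exploit Theorem \ref{thm5.4} and it is correct as written: the dimension bound $H^{\BM}_j(X)=0$ for $j>2\dim_{\mathbb C}X$ kills everything on the antidiagonal $p+q=2d_0$ except $E^1_{d_0,d_0}$, and your check that both differentials in and out of $(d_0,d_0)$ vanish for every $r$ is right, so $H^{\BM}_{2d_0}(\mathscr X_{n,O}(f))\cong\bigoplus_{\sigma(J,a)=d_0}H^{\BM}_{2d_0}(\mathscr X_{J,a})$. Since each top stratum $\mathscr X_{J,a}$ is smooth of pure complex dimension $d_0$ by Theorem \ref{mainthm1}, each summand is free of rank equal to its number of connected components, so the total rank is the number of connected components of the dense open $U:=S_{d_0}\setminus S_{d_0-1}$.

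The ``bookkeeping step'' you flag deserves more care than you give it, but the resolution is clean and the worry about gluing is a red herring. What the spectral sequence pins down is the number of connected components of $U$, and the correct invariant this computes is the number of $d_0$-dimensional \emph{irreducible} components of $\mathscr X_{n,O}(f)$, not, in general, the number of $d_0$-dimensional connected components. The bijection is standard: each connected component $C$ of $U$ is smooth and connected, hence irreducible, and its Zariski closure $\overline C$ is a $d_0$-dimensional irreducible component of $\mathscr X_{n,O}(f)$; conversely, if $Z$ is such a component, then $Z\cap U$ is a nonempty dense Zariski-open of the irreducible variety $Z$, hence connected in the usual topology, and is therefore contained in a unique component of $U$ whose closure recovers $Z$. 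No hypothesis that distinct components of $U$ stay separated downstairs is needed, and indeed two $d_0$-dimensional irreducible components may well meet along lower strata, in which case they lie in a single connected component of $\mathscr X_{n,O}(f)$ but still contribute rank two to $H^{\BM}_{2d_0}$. So either read the corollary's ``connected components'' as ``irreducible components'' (the statement that actually holds for an arbitrary complex variety), or, if the authors really mean connected components, one would additionally need to know that the top-dimensional irreducible components of $\mathscr X_{n,O}(f)$ are pairwise disjoint, which is not established. With the ``irreducible'' reading your proof is complete and matches the intended argument.
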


\subsection{Sheaf cohomology groups of contact loci}
In this subsection, we are going to prove the following theorem.

\begin{theorem}\label{thm5.6}
Let $f\in\mathbb C[x_1,\dots,x_d]$ be nondegenerate, $n\in \mathbb N^*$ and $f(O)=0$. Let $\mathcal F$ be an arbitrary sheaf of abelian groups on $\mathscr X_{n,O}(f)$. Then, there is a spectral sequence
\begin{equation}\label{spseq5.6}
E^{p,q}_1:=\bigoplus_{(J, a)\in \widetilde{\mathcal P}_n,\eta(J, a)=p}H^{p+q}_c(\mathscr{X}_{J,a}^{(n)}, \mathcal{F})\Longrightarrow  H^{p+q}_c(\mathscr X_{n,O}(f), \mathcal{F}).
\end{equation}
\end{theorem}

\begin{proof} 
We use the notation in Lemma \ref{lem5.3}. For simplicity, we write $S$ for $S_{d_0}=\mathscr X_{n,O}(f)$. For any $0\leq p \leq d_0$, we put $S_p^{\circ}:= S_p\setminus S_{p-1},$
which is a $\mu_n$-invariant subset of $S$. Consider the inclusions $j_p: S^{\circ}_p \hookrightarrow S_p$, $k_p: S\setminus S_p \hookrightarrow  S$ and $i_p: S_p\hookrightarrow  S$. Put $\mathcal{F}_p:= (j_{p})_!(j_p)^{-1}(i_p)^{-1}\mathcal{F}$ and $F^p(\mathcal{F}):= (k_{p-1})_!(k_{p-1})^{-1}\mathcal{F}$ for every $p\geq 1$, with the convention $F^0(\mathcal{F}):= \mathcal{F}$. Then we have the exact sequences
$$0\to F^{p+1}(\mathcal{F})\to F^p(\mathcal{F}) \quad\textrm{and}\quad 0\to (i_{p})_*\mathcal{F}_p\to \mathcal{F}|_{S_p}\to \mathcal{F}|_{S_{p-1}},$$
in which by $\mathcal{F}|_{S_p}$ we mean $(i_p)_*(i_p)^{-1}\mathcal F$. 
Therefore we have the following diagram
$$
\left.
\begin{array}{ccccccccc}
  &  &  &  &  &  & 0 &  &  \\
  &  &  &  &  &  & \downarrow &  &  \\
  &  & 0 &  &  &  & (i_{p})_*\mathcal{F}_p &  &  \\
  &  & \downarrow &  &  &  & \downarrow &  &  \\
 0 & \to & F^{p+1}(\mathcal{F}) & \to & \mathcal{F} & \to & \mathcal{F}|_{S_p} & \to  & 0 \\
  &  & \downarrow &  & || &  & \downarrow &  &  \\
 0 & \to & F^{p}(\mathcal{F}) & \to & \mathcal{F} & \to & \mathcal{F}|_{S_{p-1}} & \to &  0
  \end{array}
\right.
$$
It implies from the snake lemma that $F^{p}(\mathcal{F})/F^{p+1}(\mathcal{F})\cong (i_{p})_*\mathcal{F}_p.$ Thus there is a filtration of  $\mathcal{F}$ by ``skeleta'': $\mathcal{F}= F^{0}(\mathcal{F})\supseteq F^{1}(\mathcal{F})\supseteq \cdots.$
It gives the following spectral sequence of cohomology groups with compact support
\begin{equation}\label{eq:specseqYdt}
E_1^{p,q}(S,\mathcal{F}):=  H^{p+q}_c(S,(i_{p})_*\mathcal{F}_p)
\Longrightarrow H^{p+q}_c(S,\mathcal{F}). 
\end{equation}

Since $S_p$ is a closed subset of $S$, $H^{m}_c(S,(i_{p})_*\mathcal{F}_p)\cong H^{m}_c(S_p,\mathcal{F}_p)$ for any $m$ in $\mathbb N$. Also, by the isomorphisms given by the extension by zero sheaf, we have 
$$H^{m}_c(S_p,\mathcal{F}_p)= H^{m}_c(S_p,(j_{p})_!(j_p)^{-1}(i_p)^{-1}\mathcal{F})    \cong H^m_c(S_p^{\circ}, (j_p)^{-1}(i_p)^{-1}\mathcal{F}).$$
We have that 
$$S_p^{\circ}=\bigsqcup\limits_{\eta(J,a)=p} \mathscr{X}_{J,a}^{(n)}.$$
 Then by the reason as in the proof of Theorem \ref{thm5.4}, we get 
$$H^m_c(S_p^{\circ}, (j_p)^{-1}(i_p)^{-1}\mathcal{F})=\bigoplus_{(J, a)\in \widetilde{\mathcal P}_n,\eta(J, a)=p}H^{p+q}_c(\mathscr{X}_{J,a}^{(n)},(l_{J,a})^{-1}(j_p)^{-1}(i_p)^{-1}\mathcal{F}),$$
where $l_{J,a}$ is the inclusion of $\mathscr{X}_{J,a}^{(n)}$ in $S_p^{\circ}$. For simplicity of notation, we write $H^{p+q}_c(\mathscr{X}_{J,a}^{(n)}, \mathcal{F})$ instead of $H^{p+q}_c(\mathscr{X}_{J,a}^{(n)},(l_{J,a})^{-1}(j_p)^{-1}(i_p)^{-1}\mathcal{F})$. The proof is completed.
\end{proof}

 Now, we consider the spectral sequence (\ref{spseq5.6}) for constant sheaf. we need some notation, for each $\gamma\in K$, $k\in \mathbb N, n\in \mathbb N^*$ and $p\in \mathbb Z$, we denote by $D_{\gamma,k,p}^{(n)}$ the set of all $a\in \sigma_{J_{\gamma}, \gamma}\cap \Delta^{(n,k)}_{J_{\gamma}}$ such that $d-1+|J_{\gamma}|n-s(a)-k=p$, which is a finite set. Then, for each $(J, a)\in \widetilde{\mathcal P}_n$ which $\eta(J, a)=p$, there exist $\gamma\in K$ and $k\in \mathbb N$ such that $J= J_{\gamma}$ and $a\in D_{\gamma,k,p}^{(n)}$.  

  The summands in the spectral sequence (\ref{spseq5.6}) are described more explicitly in case of constant sheaf as below.

\begin{lemma} 
Let $\gamma\in K, n\in \mathbb N^*$ and $p, q\in \mathbb Z$. Then, for $J= J_{\gamma}$ and $a\in D_{\gamma,0,p}^{(n)}$ we have
\begin{equation}
H^{p+q}_c(\mathscr{X}_{J,a}^{(n)}, \mathbb{C})  \cong H_{p-q}(X_{\gamma}(1), \mathbb{C}).
\end{equation}

\end{lemma}

\begin{proof}
It follows from Theorem \ref{mainthm1} that $\mathscr{X}_{J,a}^{(n)}$ is a $p$-dimensional complex manifold and is homeomorphic to  $X_{\gamma}(1) \times{\mathbb C}^{|J_{\gamma}|\ell_{J_{\gamma}}(a)-s(a)}$. Then, by combining the duality and the Kunneth formula we get the conclusion.
\end{proof}

We also have the following description for the cohomology of  $\mathscr{X}_{J,a}^{(n)}$ for $J= J_{\gamma}$ and $a\in D_{\gamma,k,p}^{(n)}, k\in \mathbb N^*.$

\begin{lemma} 
Let $\gamma\in K, n, k\in \mathbb N^*$ and $p, q\in \mathbb Z$. Then, for $J= J_{\gamma}$ and $a\in D_{\gamma,k,p}^{(n)}$ we have
\begin{equation}
H^{p+q}_c(\mathscr{X}_{J,a}^{(n)}, \mathbb{C})  \cong H_{p-q}(X_{\gamma}(0), \mathbb{C}).
\end{equation}

\end{lemma}

\begin{proof}
Since $\mathscr{X}_{J,a}^{(n)}$ is a $p$-dimensional complex manifold, then by duality, we have 
$$H^{p+q}_c(\mathscr{X}_{J,a}^{(n)}, \mathbb{C}) \cong H_{p-q}(\mathscr{X}_{J,a}^{(n)}, \mathbb{C}).$$
 On the other hand, by Theorem \ref{mainthm1}, $\mathscr{X}_{J,a}^{(n)}$ is a locally trivial fibration on $X_{\gamma}(0)$ with fiber ${\mathbb C}^{|J_{\gamma}|(\ell_{J_{\gamma}}(a)+k)-s(a)-k}$ which is contractible. Hence, by the spectral sequence for (Serre) fibration, we obtain that
$H_{p-q}(\mathscr{X}_{J,a}^{(n)}, \mathbb{C})\cong H_{p-q}(X_{\gamma}(0), \mathbb{C}).$ The proof is complete
\end{proof}

We have the following result concerning cohomology of contact loci.

\begin{corollary}\label{corconstantsheaf}
Let $f\in\mathbb C[x_1,\dots,x_d]$ be nondegenerate, $n\in \mathbb N^*$ and $f(O)=0$.Then, there is a spectral sequence
\begin{equation}\label{spseqconstantsheaf}
E^{p,q}_1:=\bigoplus _{\gamma\in K}\Big(H_{p-q}(X_{\gamma}(1), \mathbb{C})^{|D_{\gamma,0,p}^{(n)}|}
\oplus\bigoplus_{k\geq 1}H_{p-q}(X_{\gamma}(0), \mathbb{C})^{|D_{\gamma,k,p}^{(n)}|}\Big)\Longrightarrow  H^{p+q}_c(\mathscr X_{n,O}(f), \mathbb{C}).
\end{equation}
\end{corollary}

\begin{proof} Apply Theorem \ref{thm5.6} for $\mathcal{F}$ to be the constant sheaf on $\mathscr X_{n,O}(f)$ associated to the field of complex numbers $\mathbb{C}$, since the inverse image of constant sheaf is a constant sheaf, the Corollary is a direct consequence of Theorem \ref{thm5.6} and the above lemmas.

\end{proof}

\begin{ack}
The first author is deeply grateful to Fran\c cois Loeser for introducing him to Problem \ref{pro1}. The authors thank Vietnam Institute for Advanced Study in Mathematics (VIASM) and Department of Mathematics - KU Leuven for warm hospitality during their visits. 
\end{ack}


\end{document}